\newtheorem{theorem}{Theorem}[section]
\newtheorem{corollary}[theorem]{Corollary}
\newtheorem{lemma}[theorem]{Lemma}
\begin{document}
 \title{SAW$^{*}$-algebras are essentially non-factorizable}
 \author{ Saeed Ghasemi}
 \thanks{The author is indebted to Ilijas Farah for several suggestions and useful conversations. I would also like to thank the referee for suggesting several improvements.}

\address{Department of Mathematics, York University, Toronto, CA}
 \date{}
 \maketitle

\begin{abstract}
In this paper we solve a question of Simon Wassermann, whether the Calkin algebra can be written as a C*-tensor product of two infinite dimensional C*-algebras. More generally we show that there is no surjective *-homomorphism from a $SAW^{*}$-algebra onto C*-tensor product of two infinite dimensional C*-algebras.
\end{abstract}

\section{Introduction}

 It was shown by L. Ge \cite{Ge}, using free entropy,  that if the group von Neumann algebra of $\mathbb{F}_{2}$, $L(\mathbb{F}_{2})$,  is written as the von Neumann tensor product of two von Neumann algebras $M$ and $N$ then  either $M$ or $N$ has to be isomorphic to the algebra of $n\times n$ matrices  $\mathbb{M}_{n}(\mathbb{C})$ for some $n$.
 For two C*-algebras $\mathcal{A}$ and $\mathcal{B}$ the C*-algebra tensor product is not unique and for a C*-norm $\|.\|_{\nu}$ on the algebraic tensor product $\mathcal{A}\odot\mathcal{B}$ the completion is usually denoted by $\mathcal{A}\otimes_{\nu}\mathcal{B}$ (see \cite{Black}). A C*-algebra $\mathcal{A}$ is called \emph{essentially non-factorizable} if it can not be written as $\mathcal{B}\otimes_{\nu} \mathcal{C}$ where both $\mathcal{B}$ and $\mathcal{C}$ are infinite dimensional for any C*-algebra norm $\nu$. In a presentation at the London Mathematical Society
meeting held in Nottingham in 2010, Simon Wassermann demonstrated that the reduced C*-algebra of $\mathbb{F}_{2}$, $C^{*}_{r}(\mathbb{F}_{2})$, is essentially non-factorazable. In fact if $C^{*}_{r}(\mathbb{F}_{2})= \mathcal{B}\otimes_{\nu} \mathcal{C}$, for some C*-norm $\nu$ and infinite dimensional C*-algebra $\mathcal{B}$ then $\mathcal{C}=\mathbb{M}_{n}(\mathbb{C})$ with $n=1$. It was asked by Wassermann whether the Calkin algebra is essentially non-factorizable. We prove that the answer to this question is positive, by showing that all $SAW^{*}$-algebras, of which the Calkin algebra is an example, are
essentially non-factorizable.

It is well known that C*-algebras can be viewed as non-commutative topological spaces and the correspondence $X \leftrightarrow C(X)$ is a contravariant category equivalence between the category of compact Hausdorff spaces and continuous maps and the category of commutative unital C*-algebras and unital *-homomorphisms. Each property of a locally compact Hausdorff space can be reformulated in terms of the function algebra $C_{0}(X)$, so it usually make sense to ask about these properties for noncommutative C*-algebras. $SAW^{*}$-algebras were introduced by G.K. Pedersen \cite{PedSAW} as non-commutative analogues of sub-Stonean spaces (also known as F-spaces) in topology, which are the locally compact Hausdorff spaces in which disjoint $\sigma$-compact open subspaces have disjoint compact closure. Analogously a C*-algebra $\mathcal{A}$ is called an $SAW^{*}$-algebra if for every two orthogonal elements $x$ and $y$ in $\mathcal{A}_{+}$ , there is an element $e$ in $\mathcal{A}_{+}$ such that $ex=x$ and $ey=0$. It is not hard to see that an abelian C*-algebra $C_{0}(X)$ is a $SAW^{*}$-algebra if and only if $X$ is a sub-Stonean space.

In \cite{PedCor} and \cite{PedSAW} some of the properties of sub-Stonean spaces are generalized to $SAW^{*}$-algebras. It is proved  ( cf. \cite{PedSAW}) that the corona algebra of any $\sigma$-unital C*-algebra is a $SAW^{*}$-algebra. In particular for a separable Hilbert space the Calkin algebra is a $SAW^{*}$-algebra. Another class of C*-algebras, called countably degree-1 saturated C*-algebras, are introduced in \cite{FH} using model theoretic notions. It was shown that countably degree-1 saturated C*-algebras contain all coronas of $\sigma$-unital C*-algebra, ultrapowers of C*-algebras and relative commutants of  separable subalgebras of a countably degree-1 saturated C*-algebra, and they are all $SAW^{*}$-algebras. In this paper we will use another property of sub-Stonean spaces to show that $SAW^{*}$-algebras are essentially non-factorizable. This will show that the ultrapowers of C*-algebras  and relative commutants of  separable subalgebras of a countably degree-1 saturated C*-algebra are also essentially non-factorizable. A similar result for ultrapowers of type II$_{1}$-factors with respect to a free ultrafilter is proved in \cite{FGL}.\\
 In this paper we don't require any knowledge about sub-Stonean spaces and it's enough to know that $\beta \mathbb{N}$, the Stone-\v{C}ech compactification of $\mathbb{N}$, is a sub-Stonean space.

\section{$SAW^{*}$-algebras are essentially non-factorizable}

We adopt standard notations from Ramsey theory and write $[\mathbb{N}]^2$ to denote the set of all $(m,n)\in \mathbb{N}^2$ such that $m<n$ and $\Delta^{2} \mathbb{N}$ to denote the diagonal of $\mathbb{N}^{2}$. For spaces $X$ and $Y$ a rectangle is a subset of $X\times Y$ of the form $A\times B$ for $A\subset X$ and $B\subset Y$. We say a map $f$ on $A\times B$ depends only on the first coordinate if $f(x,y)=f(x,z)$ for every $(x,y)$ and $(x,z)$ in $A\times B$.
In [11, lemma 5.1] Van Douwen proved that for any continuous map $f: {\beta\mathbb{N}}^{2}\rightarrow \beta\mathbb{N}$ there is a clopen $U\subset \beta\mathbb{N}$ such that $f\upharpoonright U^{2}$ depends on at most one coordinate and conjectured [11, conjecture 8.4] that there is a disjoint open cover of $\beta\mathbb{N}^{2}$  into such sets.   In [4, theorem 3] I. Farah showed that for a sub-Stonean space $Z$, compact spaces $X$ and $Y$, every continuous map $f: X\times Y\rightarrow Z$ is of a "$very ~simple$" form, which will be clear from theorem \ref{10} (in fact the theorem is proved for a larger class of spaces so called the $\beta\mathbb{N}$- spaces in the range and arbitrary powers of a compact space in the domain. However the theorem remains true if products of arbitrary compact spaces is replaced in the domain of the map). We sketch the proof of this theorem for the convenience of the reader. Before we need the following lemma. \\
\begin{lemma}\label{51}
 Suppose $X, Y$ and $Z$ are arbitrary sets, $\rho: X\times Y\rightarrow Z$ a map, then exactly one of the following holds:
 \begin{enumerate}
  \item $X\times Y$ can be covered by finitely many mutually disjoint rectangles such that $\rho$ depends on at most one coordinate on each of them.
   \item There are sequences ${x_{i}}\in X$, ${y_{i}}\in Y$ such that for all $i$ and all
$j<k$ we have $\rho({x_{i}},{y_{i}})\neq \rho({x_{j}},{y_{k}})$.
\end{enumerate}
Moreover if $X, Y$ and $Z$ are topological spaces and $\rho$ is a continuous map, we can assume that the rectangles in $(1)$ are clopen.
\end{lemma}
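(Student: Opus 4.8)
The plan is to prove that the two alternatives exclude each other and that at least one always holds; the topological addendum will then come from re-running the second half of the argument with ``clopen'' built in.

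\emph{Mutual exclusivity.} Suppose $X\times Y=\bigsqcup_{\ell=1}^{n}A_{\ell}\times B_{\ell}$ with $\rho$ depending on at most one coordinate on each $A_{\ell}\times B_{\ell}$, and suppose sequences $(x_{i}),(y_{i})$ as in $(2)$ exist. By the pigeonhole principle two of the points $(x_{1},y_{1}),\dots,(x_{n+1},y_{n+1})$, say $(x_{i},y_{i})$ and $(x_{k},y_{k})$ with $i<k$, lie in the same rectangle $A_{\ell}\times B_{\ell}$; then also $(x_{i},y_{k})\in A_{\ell}\times B_{\ell}$. If $\rho$ depends only on the first coordinate there, $\rho(x_{i},y_{k})=\rho(x_{i},y_{i})$, contradicting $(2)$ applied with the triple $a,b,c=i,i,k$; if it depends only on the second, $\rho(x_{i},y_{k})=\rho(x_{k},y_{k})$, contradicting $(2)$ with $a,b,c=k,i,k$; if $\rho$ is constant on the rectangle both contradictions occur. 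Hence $(1)$ and $(2)$ cannot both hold, and the same computation shows that once the infinite sequences of $(2)$ exist no finite partition of the required type exists.

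\emph{At least one holds.} Call a rectangle \emph{simple} if $\rho$ depends on at most one coordinate on it, and \emph{decomposable} if it is a finite disjoint union of simple sub-rectangles; a finite disjoint union of decomposable rectangles is again decomposable, so if a rectangle is not decomposable then in any finite partition of it into sub-rectangles some piece is not decomposable. If $X\times Y$ is decomposable we are in case $(1)$; otherwise we build the sequences of $(2)$ by recursion. We maintain at stage $n$ points $x_{1},\dots,x_{n}\in X$, $y_{1},\dots,y_{n}\in Y$ and non-empty $X_{n}\subseteq X$, $Y_{n}\subseteq Y$ such that $X_{n}\times Y_{n}$ is not decomposable, the inequalities of $(2)$ hold among indices $\le n$, and a freshness invariant holds: the finitely many values $\rho(x_{a},y)$ for $a\le n,\ y\in Y_{n}$ and $\rho(x,y_{a})$ for $a\le n,\ x\in X_{n}$ all avoid the finite committed set $\{\rho(x_{a},y_{a}):a\le n\}\cup\{\rho(x_{b},y_{c}):b<c\le n\}$. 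To pass to $n+1$: since $X_{n}\times Y_{n}$ is not decomposable it is not simple, so $\rho$ genuinely depends on both coordinates over it; partitioning $X_{n}\times Y_{n}$ into finitely many sub-rectangles according to which (if any) committed value a suitable slice of $\rho$ takes, always retaining a non-decomposable piece, and iterating the same device on the $Y$-side, we locate $x_{n+1},y_{n+1}$ with $\rho(x_{n+1},y_{n+1})$ fresh and distinct from each $\rho(x_{b},y_{n+1})$, together with a non-decomposable $X_{n+1}\times Y_{n+1}$ on which freshness is restored. Iterating yields the infinite sequences of $(2)$. The crux — and the step I expect to be the main obstacle — is precisely this bookkeeping: after choosing the new point one must shrink $X_{n},Y_{n}$ to reinstate freshness \emph{and} keep the result non-decomposable, which forces one to work with finite partitions into sub-rectangles (exploiting ``a finite union of decomposables is decomposable'' to keep a surviving non-decomposable piece) rather than by deleting points, and to choose the new point and the new sub-rectangle together.

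\emph{Topological addendum.} If $X,Y,Z$ are topological spaces and $\rho$ is continuous, re-run the recursion with $X_{n},Y_{n}$ required clopen and with ``decomposable'' meaning a finite disjoint union of clopen simple rectangles. The only change is in the finite partitioning step: separate the finitely many committed values in $Z$ by disjoint clopen sets (available from continuity of $\rho$ together with the presence of enough clopen sets in the ambient spaces, as in the intended applications to $\beta\mathbb{N}$-spaces) and pull them back under the relevant slices of $\rho$; the resulting pieces are clopen rectangles and the induction runs verbatim. Thus $\neg(2)$ produces a partition as in $(1)$ consisting of clopen rectangles, which together with mutual exclusivity gives the ``exactly one'' and the ``moreover''.
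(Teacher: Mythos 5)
Your mutual-exclusivity argument is correct and complete, but the half you yourself flag as the crux --- the recursion producing the sequences of $(2)$ when $X\times Y$ is not decomposable --- has a genuine gap, and it is exactly where the combinatorial content of the lemma lives. At the passage from stage $n$ to $n+1$ you must pick $x_{n+1}\in X_{n}$, $y_{n+1}\in Y_{n}$ and then shrink to a non-decomposable $X_{n+1}\times Y_{n+1}$ restoring freshness; your device is to partition $Y_{n}$ (and then $X_{n}$) according to which committed value the slice $y\mapsto\rho(x_{a},y)$ takes and to ``retain a non-decomposable piece.'' But the previously chosen points $x_{a}$ in general no longer lie in $X_{n}$, so the slice functions $\rho(x_{a},\cdot)$ carry no information about the restriction of $\rho$ to $X_{n}\times Y_{n}$: nothing prevents the only surviving non-decomposable piece from being $X_{n}\times\{y:\rho(x_{a},y)=z\}$ for a committed value $z$, in which case freshness cannot be restored inside any non-decomposable sub-rectangle; the ``fresh'' piece $X_{n}\times\{y:\rho(x_{a},y)\notin C\}$ may even be empty (e.g.\ if $\rho(x_{n+1},\cdot)$ is constantly $\rho(x_{1},y_{1})$ on $Y_{n}$). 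You give no rule for choosing $x_{n+1}$, or for revising earlier choices, that avoids this, so the induction does not close. Note that the paper does not reprove this step either: it quotes Farah's dichotomy [3, Theorem 3] for the set-theoretic statement, so a self-contained argument here would have to reconstruct that proof, not just its bookkeeping shell.

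The topological addendum has a second, independent problem: you separate the committed values of $\rho$ by disjoint clopen sets, which presupposes ``enough clopen sets in the ambient spaces.'' The lemma asserts the clopen refinement for arbitrary topological spaces and continuous $\rho$, and it is applied in Theorem \ref{10} to arbitrary compact $X$ and $Y$, which may well be connected, so this extra hypothesis is not available. The paper's route is different and does not go through the recursion at all: take the rectangles produced by the set-theoretic dichotomy, pass to their closures (continuity of $\rho$ preserves dependence on at most one coordinate there), and then invoke [4, Theorem 8.2] to replace the closed rectangles by clopen ones. If you want to avoid citation you would need to reproduce both of these results, not only the first.
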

\begin{proof}
For any map from $X^{2}$ into $X$ this is an immediate consequence of [3, theorem 3]. One can check the proof of this theorem to see that a small adjustment in definitions would give the same result for any map from $X\times Y$ into $Z$. To see the second part, note that the closures of this rectangles are still rectangles, and since $\rho$ is continuous, it depends on at most one coordinate on each of this closures. By [4, theorem 8.2] we can assume these rectangles are clopen.
\end{proof}
\begin{theorem}\label{10}
 If $\rho$ is a continuous map from $X\times Y$ into $Z$ where $X$ and $Y$ are compact topological spaces and $Z$ is a sub-Stonean space, then  $X\times Y$ can be covered by finitely many mutually disjoint clopen rectangles such that $\rho$ depends on at most one coordinate on each of them.
\end{theorem}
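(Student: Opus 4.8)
The plan is to invoke Lemma~\ref{51} and rule out its second alternative. So suppose, toward a contradiction, that $X\times Y$ cannot be covered by finitely many pairwise disjoint clopen rectangles on each of which $\rho$ depends on at most one coordinate. By Lemma~\ref{51} there are then sequences $(x_i)$ in $X$ and $(y_i)$ in $Y$ with $\rho(x_i,y_i)\neq\rho(x_j,y_k)$ for all $i$ and all $j<k$. Since $X$ and $Y$ are compact Hausdorff, the maps $i\mapsto x_i$ and $i\mapsto y_i$ extend to continuous maps $\bar x\colon\beta\mathbb{N}\to X$ and $\bar y\colon\beta\mathbb{N}\to Y$, so that $g:=\rho\circ(\bar x\times\bar y)\colon\beta\mathbb{N}\times\beta\mathbb{N}\to Z$ is continuous with $g(m,n)=\rho(x_m,y_n)$; in particular $g(i,i)\neq g(j,k)$ whenever $j<k$. (It actually suffices to treat the case $X=Y=\beta\mathbb{N}$, $\rho=g$: the pair of sequences $i\mapsto i$ shows that $g$ satisfies the second alternative of Lemma~\ref{51}, hence $g$ cannot satisfy the first one, so proving the theorem for maps out of $\beta\mathbb{N}^{2}$ already yields the desired contradiction. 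This is also where the clopen structure becomes available.)

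Set $D:=g(\Delta^{2}\mathbb{N})=\{\rho(x_i,y_i):i\in\mathbb{N}\}$ and $T:=g([\mathbb{N}]^{2})=\{\rho(x_j,y_k):j<k\}$, so that $D\cap T=\emptyset$ by the inequality above. The next step is to observe that for every nonprincipal ultrafilter $p$ the point $(p,p)\in\beta\mathbb{N}^{2}$ lies in $\overline{\Delta^{2}\mathbb{N}}\cap\overline{[\mathbb{N}]^{2}}$: a basic neighbourhood of $(p,p)$ contains a set of the form $\overline A\times\overline B$ with $A,B\in p$, and since $A\cap B$ is infinite this set contains both a diagonal pair $(n,n)$ and a pair $(j,k)$ with $j<k$. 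Applying $g$ and using that $g(\overline{\Delta^{2}\mathbb{N}})=\overline D$ and $g(\overline{[\mathbb{N}]^{2}})=\overline T$ (continuous images of compact sets, hence closed), we conclude that the point $z^{*}:=g(p,p)$ lies in $\overline D\cap\overline T$. Thus the compact sets $\overline D$ and $\overline T$ meet, even though $D$ and $T$ are disjoint.

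The heart of the matter — and the step I expect to be the main obstacle — is to turn this into a contradiction with the hypothesis that $Z$ is sub-Stonean. Plain disjointness of $D$ and $T$ is not enough: in a sub-Stonean space, and already in $\beta\mathbb{N}$ itself, two disjoint countable sets can have overlapping closures. What must be used is the \emph{whole} family of inequalities delivered by Lemma~\ref{51}, not merely $D\cap T=\emptyset$, together with the continuity of $\rho$ and the compactness of $X,Y$: the idea is to pass to infinite subsets of the index set along which the configuration becomes rigid, so that the resulting pieces $D'\subseteq D$ and $T'\subseteq T$ come to lie inside two disjoint $\sigma$-compact open subsets $U$ and $V$ of $Z$ while $z^{*}$ still belongs to $\overline{D'}\cap\overline{T'}$. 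Once this is arranged, the defining property of sub-Stonean spaces applies directly: disjoint $\sigma$-compact open subsets have disjoint closures, so $z^{*}\in\overline U\cap\overline V=\emptyset$, which is absurd. The delicate point is exactly the production of the separating open sets $U$ and $V$; it amounts to recognising inside $Z$ a copy of the canonical inclusion $\mathbb{N}\subseteq\beta\mathbb{N}$ — a countable relatively discrete, $C^{*}$-embedded set — and it is here that one genuinely invokes the $F$-space behaviour of $Z$ (no nontrivial convergent sequences, countable relatively discrete subsets behaving inside $Z$ as $\mathbb{N}$ does inside $\beta\mathbb{N}$), which is precisely the extra property of sub-Stonean spaces the argument relies on and is ultimately just a consequence of the fact that $\beta\mathbb{N}$ is sub-Stonean.
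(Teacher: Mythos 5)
Your reduction is the same as the paper's: you invoke Lemma~\ref{51}, take the sequences from its second alternative, and transport the problem to a continuous map $h\colon\beta\mathbb{N}^{2}\to Z$ satisfying $h(l,l)\neq h(m,n)$ for all $l$ and all $m<n$, i.e.\ $h(\Delta^{2}\mathbb{N})\cap h([\mathbb{N}]^{2})=\emptyset$. Everything up to that point is correct, including the observation that $(p,p)\in\overline{\Delta^{2}\mathbb{N}}\cap\overline{[\mathbb{N}]^{2}}$ for every nonprincipal ultrafilter $p$, so that the closures of the two image sets meet.

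But your argument stops exactly where the real content begins. The fact you need --- that for a continuous $h\colon\beta\mathbb{N}^{2}\to Z$ with $Z$ sub-Stonean the sets $h(\Delta^{2}\mathbb{N})$ and $h([\mathbb{N}]^{2})$ must intersect --- is Corollary 7.6 of \cite{Fdim}, and the paper's proof consists precisely of the reduction you carried out followed by a citation of that corollary. You correctly diagnose that two disjoint countable sets in an F-space can have intersecting closures, so that disjointness alone gives no contradiction; but your proposed remedy (``pass to infinite subsets along which the configuration becomes rigid, so that $D'$ and $T'$ lie in disjoint $\sigma$-compact open sets $U$ and $V$ while $z^{*}$ remains in both closures'') is a restatement of the goal, not an argument: producing such $U$ and $V$ is exactly the hard step, no construction is offered, and it is not clear that any subsequence extraction of the kind you describe yields them. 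Note also that the ``whole family of inequalities'' supplied by Lemma~\ref{51} is literally the single statement $D\cap T=\emptyset$ (nothing is asserted about $h(j,k)$ for $j>k$), so the additional leverage must come from the product structure of $\beta\mathbb{N}^{2}$ and the coherence of $h$, which your sketch never exploits. As written there is a genuine gap at the only nontrivial step; the proof would be complete if you either cited Corollary 7.6 of \cite{Fdim} or actually proved it.
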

\begin{proof}
 We just need to show that the case $(2)$ of the lemma \ref{51} does not happen. Suppose $\{x_{i}\}$ and $\{y_{i}\}$ are sequences guaranteed by $(2)$. Define the map $g : \mathbb{N}^{2}\longrightarrow X\times Y$ by $g(m,n) = (x_{m} , y_{n})$. Then  $g$ continuously extends to a map $\beta g : \beta \mathbb{N}^{2}\longrightarrow X\times Y$ and the continuous map $h : \beta \mathbb{N}^{2}\longrightarrow Z$ defined by
$h = \rho \circ \beta g$ has the property that $h(l,l)\neq h(m,n)$  for all $l$ and all $m,n$ such that $m<n$. This contradicts Corollary $7.6$ in \cite{Fdim} which states that if $h: \beta\mathbb{N}^{2}\longrightarrow Z$ is a continuous map and $Z$ is a sub-Stonean space, then the sets $h([\mathbb{N}]^{2})$ and $h(\Delta^{2}\mathbb{N})$ have nonempty intersection.
\end{proof}
As a corollary of this, if $X$ and $Y$ are infinite, any such $\rho$ is not injective. For C*-algebras, the product of non-commutative spaces corresponds to the tensor product of algebras. By the Gelfand transform we can restate Farah's theorem in terms of commutative C*-algebras.
\nonumber \begin{theorem}\label{1}
 Suppose  $f: \mathcal{A}\rightarrow \mathcal{B}\otimes \mathcal{C}$ is a unital *-homomorphism, where $\mathcal{A}, \mathcal{B}$ and $\mathcal{C}$ are unital commutative C*-algebras and $\mathcal{A}$ is a $SAW^{*}$-algebra. Then there are finitely many projections $p_{1}, \dots p_{s}$ in $\mathcal{B}$ and $q_{1}, \dots q_{t}$ projections in $\mathcal{C}$  such that $\sum_{i=1}^{s} p_{i} = 1_{\mathcal{B}}$ and  $\sum_{i=1}^{t} q_{i} = 1_{\mathcal{C}}$ and for every $1\leq i\leq s$ and $1\leq j \leq t$ and either for every $a\in \mathcal{A}$ we have $(p_{i}\otimes q_{j})f(a)\in (p_{i}\mathcal{B}p_{i})\otimes q_{j}$ or for every $a\in \mathcal{A}$ we have $(p_{i}\otimes q_{j})f(a)\in p_{i} \otimes (q_{j}\mathcal{C}q_{j})$.
 \end{theorem}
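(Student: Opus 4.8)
The plan is to pass everything through Gelfand duality and then quote Theorem~\ref{10}. Write $\mathcal A=C(Z)$, $\mathcal B=C(X)$ and $\mathcal C=C(Y)$ for compact Hausdorff spaces $Z,X,Y$; since $\mathcal A$ is a unital commutative $SAW^{*}$-algebra, $Z$ is sub-Stonean (by the characterization recalled in the introduction). As $\mathcal B$ and $\mathcal C$ are commutative, hence nuclear, there is a unique C*-norm on $\mathcal B\odot\mathcal C$ and $\mathcal B\otimes\mathcal C=C(X\times Y)$, with $X\times Y$ compact. The unital $*$-homomorphism $f\colon C(Z)\to C(X\times Y)$ is then dual to a continuous map $\rho\colon X\times Y\to Z$ with $f(a)=a\circ\rho$ for every $a\in\mathcal A$.

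Applying Theorem~\ref{10} to $\rho$ yields finitely many pairwise disjoint clopen rectangles $R_1,\dots,R_n$, say $R_k=A_k\times B_k$, covering $X\times Y$, such that $\rho$ depends on at most one coordinate on each $R_k$. I would then refine this cover to a grid. Let $p_1,\dots,p_s$ be the atoms of the finite Boolean algebra of clopen subsets of $X$ generated by $A_1,\dots,A_n$, and $q_1,\dots,q_t$ the atoms of the Boolean algebra of clopen subsets of $Y$ generated by $B_1,\dots,B_n$; identify each $p_i$ (resp.\ $q_j$) with the corresponding projection of $\mathcal B$ (resp.\ $\mathcal C$). Then $\{p_i\}$ and $\{q_j\}$ are partitions of $X$ and $Y$ into clopen sets, so $\sum_{i}p_i=1_{\mathcal B}$ and $\sum_{j}q_j=1_{\mathcal C}$. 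Each $p_i$ is contained in or disjoint from every $A_k$, and each $q_j$ is contained in or disjoint from every $B_k$, so every rectangle $p_i\times q_j$ is contained in or disjoint from every $R_k$; since the $R_k$ are disjoint and cover $X\times Y$, there is a (unique) $k$ with $p_i\times q_j\subseteq R_k$. In particular $\rho$ depends on at most one coordinate on each $p_i\times q_j$.

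It remains to read off the statement. Fix $i$ and $j$, and suppose $\rho\upharpoonright(p_i\times q_j)$ depends only on the first coordinate (if it is constant, either alternative applies; the other case is symmetric). For $a\in\mathcal A$ the element $(p_i\otimes q_j)f(a)$ lies in the corner $(p_i\mathcal Bp_i)\otimes(q_j\mathcal Cq_j)\cong C(p_i\times q_j)$ and is the restriction of $a\circ\rho$ to the clopen set $p_i\times q_j$; as the latter depends only on the first coordinate there, $(p_i\otimes q_j)f(a)=g_0\otimes q_j$ where $g_0\in p_i\mathcal Bp_i$ is given by $g_0(x)=(a\circ\rho)(x,y_0)$ for $x\in p_i$ and a fixed $y_0\in q_j$. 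Hence $(p_i\otimes q_j)f(a)\in(p_i\mathcal Bp_i)\otimes q_j$ for all $a\in\mathcal A$, and in the symmetric case $(p_i\otimes q_j)f(a)\in p_i\otimes(q_j\mathcal Cq_j)$ for all $a\in\mathcal A$; this is the desired dichotomy. The whole argument is a bookkeeping translation of Theorem~\ref{10}, and the only step that really requires an argument is the refinement of the clopen rectangle cover to the grid $\{p_i\times q_j\}$, which uses only finiteness of the Boolean algebra generated by finitely many clopen sets.
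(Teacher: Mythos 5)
Your argument is correct and is precisely the route the paper intends: Theorem~\ref{1} is presented there as the Gelfand-dual restatement of Theorem~\ref{10}, and your proof simply carries out that translation, with the refinement of the disjoint clopen rectangle cover to the grid of atoms $\{p_i\times q_j\}$ being the only detail that needed spelling out. No gaps.
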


 Note that in particular every element in the image of $f$ is a finite sum of elementary tensor products and if $\mathcal{A}$ is a commutative $SAW^{*}$-algebra with no projections (e.g. $A=C(X)$ where $X$ is a connected sub-Stonean space like $\beta \mathbb{R} \setminus \mathbb{R}$) the image of $f$ can be identified with a C*-subalgebra of $\mathcal{B}$ or $\mathcal{C}$.

\begin{lemma}\label{5}

If $\mathcal{B}$ is an infinite-dimensional, unital C*-algebra, we can find an orthogonal
sequence $\{a_{1}, a_{2}, \dots \}$ in $\mathcal{B}$ such that $0\leq a_{i}\leq 1_{\mathcal{B}}$ for all $i$ and a sequence of states on $\mathcal{B}$,
$\{ \phi_{n} \}$, such that $\phi_{n}(a_{n}) = 1$ and $\phi_{n}(a_{m}) = 0 $ if $m \neq n$.
\end{lemma}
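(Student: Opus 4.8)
The plan is to obtain the two halves of the statement separately: first the orthogonal family $(a_i)$, which will come only from the hypothesis that $\mathcal B$ is infinite-dimensional, and then, for each $n$, a state $\phi_n$ norming $a_n$; the point will be that such a state automatically annihilates all the other $a_m$.

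\emph{The family $(a_i)$.} I will invoke the standard fact that an infinite-dimensional C*-algebra contains an infinite sequence $(b_n)$ of pairwise orthogonal nonzero positive elements. (If some self-adjoint $x\in\mathcal B$ has infinite spectrum this is clear, since $C^*(x,1_{\mathcal B})\cong C(\sigma(x))$ and an infinite compact space carries an infinite disjoint family of nonempty open sets. If instead every self-adjoint element has finite spectrum, then $\mathcal B$ is the linear span of its projections, and a maximal family $e_1,\dots,e_k$ of pairwise orthogonal nonzero projections must satisfy $\sum_i e_i=1_{\mathcal B}$, with each corner $e_i\mathcal Be_i$ containing no nontrivial projection and hence equal to $\mathbb C e_i$; a short computation then bounds $\dim(e_i\mathcal Be_j)$ by $1$, forcing $\dim\mathcal B\le k^2$.) Rescaling, assume $\|b_n\|=1$, and set $a_n:=b_n$; then $0\le a_n\le\|a_n\|1_{\mathcal B}=1_{\mathcal B}$ and $a_na_m=0$ for $n\ne m$.

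\emph{The states $\phi_n$.} Fix $n$. Because $a_n\ge 0$ and $\|a_n\|=1$ we have $1\in\sigma(a_n)$, so evaluation at $1$ is a character of the unital commutative C*-subalgebra generated by $a_n$; extending it to a state $\phi_n$ of $\mathcal B$ gives $\phi_n(a_n)=1$. I claim no further care is needed and that $\phi_n(a_m)=0$ whenever $m\ne n$. Indeed $L_n:=\{x\in\mathcal B:\phi_n(x^*x)=0\}$ is a closed left ideal of $\mathcal B$, and it contains $1_{\mathcal B}-a_n$, since $0\le(1_{\mathcal B}-a_n)^2\le 1_{\mathcal B}-a_n$ and $\phi_n(1_{\mathcal B}-a_n)=0$. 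As $a_m(1_{\mathcal B}-a_n)=a_m-a_ma_n=a_m$ for $m\ne n$, we get $a_m\in L_n$, that is $\phi_n(a_m^2)=0$; Cauchy--Schwarz for the state $\phi_n$ then gives $|\phi_n(a_m)|^2\le\phi_n(1_{\mathcal B})\phi_n(a_m^2)=0$.

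The only step I expect to require an idea rather than routine checking is the observation in the last paragraph: once $\phi_n(a_n)=1$, the orthogonality relations $a_ma_n=0$ are precisely what is needed, because they say that $a_m$ is a left multiple of the element $1_{\mathcal B}-a_n$, which lies in the left kernel of $\phi_n$. Everything else is bookkeeping; the existence of the infinite orthogonal family is classical, the slightly delicate case being the scattered one indicated above.
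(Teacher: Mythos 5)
Your proof is correct, and while the first half is close in spirit to the paper's, the second half takes a genuinely different route. The paper works entirely inside a maximal abelian subalgebra: it quotes the fact that a MASA of an infinite-dimensional C*-algebra is infinite-dimensional, identifies it with $C(X)$ for an infinite compact Hausdorff space $X$, picks a discrete sequence of points $\phi_{n}$ with pairwise disjoint neighborhoods $U_{n}$, and takes for $a_{n}$ Urysohn functions supported in $U_{n}$ with $a_{n}(\phi_{n})=1$; the relations $\phi_{n}(a_{m})=0$ are thus built in by construction (disjoint supports plus point evaluation), and the $\phi_{n}$ are then Hahn--Banach extended to states of $\mathcal{B}$. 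You instead decouple the two tasks: you produce the orthogonal family first, and then show that \emph{any} state norming $a_{n}$ automatically annihilates the remaining $a_{m}$, since $1_{\mathcal{B}}-a_{n}$ lies in the left kernel of $\phi_{n}$, which is a left ideal containing $a_{m}=a_{m}(1_{\mathcal{B}}-a_{n})$. That observation is precisely the content of the remark the paper records immediately after the lemma (that $\phi(a)=1$ with $0\le a\le 1_{\mathcal{A}}$ forces $\phi(b)=\phi(aba)$, quoted from \cite{FW}; with $b=a_{m}$ this gives $\phi_{n}(a_{m})=\phi_{n}(a_{n}a_{m}a_{n})=0$), so your argument in effect proves that remark and uses it where the paper does not need it. What your version buys is flexibility in the choice of the states; what the paper's version buys is that everything happens in one commutative picture with no appeal to the spectrum dichotomy. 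One small caveat on your parenthetical sketch of the scattered case: a family of pairwise orthogonal nonzero projections that is maximal in the sense of non-extendability need not consist of minimal projections (in $\mathbb{M}_{2}(\mathbb{C})$ the singleton $\{1\}$ is already maximal), so to make the corners one-dimensional you should take the family maximal under refinement, i.e.\ argue that if no infinite orthogonal family of projections exists then $1_{\mathcal{B}}$ is a finite sum of minimal projections. Since you are explicitly invoking a classical fact and this repair is routine, it is a blemish in the sketch rather than a gap in the proof.
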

\begin{proof}
 It is well-known that any maximal abelian subalgebra (MASA) of an infinite-dimensional C*-algebra $\mathcal{B}$ is also infinite-dimensional. If not then there are orthogonal 1-dimensional projections $\{ p_{1}, p_{2}, \ldots, p_{n}\}$ in the MASA such that $\sum_{i=1}^{n} p_{i}=1_{\mathcal{B}}$. Since $\mathcal{B}=$ $\sum_{i,j=1}^{n} {p_i}\mathcal{B}$${p_{j}}$ and for each pair $i,j$, we have $p_{i}\mathcal{B}p_{j}$ is either $\{0\}$ or 1-dimensional, $\mathcal{B}$ is finite-dimensional. Fix such a MASA, and by the Gelfand-Naimark theorem identify it with $C(X)$ for some  compact Hausdorff space $X$. We can also identify the set of pure states of $C(X)$ with $X$. Since $X$ is an infinite normal space we can choose a discrete sequence of pure states $\{\phi_{n}\}$ in $X$ and find a pairwise disjoint sequence $\{U_{n}\}$ of open neighborhoods of $\{\phi_{n}\}$. By  Uryshon's lemma we get an orthogonal sequence $0\leq a_{n}\leq 1_{\mathcal{B}}$ in $C(X)$ such that $\phi_{n}(a_{n})= a_{n}(\phi_{n})=1$  and $a_{n}$ vanishes outside of $U_{n}$. So $\phi_{n}(a_{m})= a_{m}(\phi_{n})=0$ if $m\neq n$. Now by the Hahn-Banach extension theorem extend $\phi_{n}$ to a functional on $\mathcal{B}$ of norm $1$. Since $\phi_{n}(1_{\mathcal{B}})=1$, this extension is a state.
\end{proof}

Note that if $\phi$ is a state on a C*-algebra $\mathcal{A}$ and $\phi(a)=1$ for $0\leq a \leq 1_{\mathcal{A}}$, as a consequence of the Cauchy-Schwartz inequality for states  we have $\phi(b)= \phi(aba) $ for any $b\in \mathcal{A}$ (cf. \cite{FW}, lemma 4.8).
\begin{lemma}\label{6}
“Let $\{\phi_{n}\}$ be a sequence of states on a $SAW^{*}$-algebra $\mathcal{A}$. If there exists a sequence
$\{a_{n}\}$ of mutually orthogonal positive elements in $\mathcal{A}$ such that $\| a_{n}\|=\phi_{n}(a_{n}) = 1$ and $\phi_{n}(a_{m}) = 0 $ if $m \neq n$ then the weak*-closure of ${\{\phi_{n}\}}$
is homeomorphic to $\beta \mathbb{N}$.
\end{lemma}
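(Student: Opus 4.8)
Write $K$ for the weak*-closure of $\{\phi_n\}$ in the dual of $\mathcal{A}$; by Banach--Alaoglu $K$ is compact Hausdorff. The plan is to verify that $n\mapsto\phi_n$ embeds the discrete space $\mathbb N$ onto a dense set of isolated points of $K$ and that disjoint subsets of $\mathbb N$ have disjoint closures in $K$; the universal property of the Stone--\v{C}ech compactification then yields $K\cong\beta\mathbb N$. For the first part: the $\phi_n$ are pairwise distinct because $\phi_n(a_n)=1\ne 0=\phi_m(a_n)$ for $m\ne n$, and for each $n$ the weak*-open set $U_n=\{\phi\in K:\phi(a_n)>1/2\}$ contains $\phi_n$ but no other $\phi_m$; since $\{\phi_m\}$ is dense in $K$ and $K$ is Hausdorff this forces $U_n=\{\phi_n\}$, so $\phi_n$ is isolated in $K$.

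The key step is the disjoint-closures property. Fix disjoint $S,T\subseteq\mathbb N$ and set $x=\sum_{n\in S}2^{-n}a_n$ and $y=\sum_{n\in T}2^{-n}a_n$. Mutual orthogonality of the $a_n$ makes both series norm-convergent, gives $x,y\ge 0$ and $xy=0$, and yields $a_n\le 2^n x$ for $n\in S$ and $a_n\le 2^n y$ for $n\in T$. Apply the $SAW^{*}$-property to $x\perp y$ to get $e\in\mathcal{A}_+$ with $ex=x$ and $ey=0$. From $ey=0$ we get $eye=0$, so for $n\in T$ the estimate $0\le ea_ne\le 2^neye=0$ gives $ea_ne=0$, hence $a_n^{1/2}e=0$ and therefore $a_ne=0$. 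Working in the unitization, $ex=x$ gives $(1-e)x=0=x(1-e)$, so $x^{1/2}(1-e)=0$, and for $n\in S$ the estimate $0\le(1-e)a_n(1-e)\le 2^n(1-e)x(1-e)=0$ gives $a_n^{1/2}(1-e)=0$, hence $a_n(1-e)=0$. Now invoke the identity $\phi_n(b)=\phi_n(a_nba_n)$, valid since $0\le a_n\le 1$ and $\phi_n(a_n)=1$: for $n\in T$ it gives $\phi_n(e)=\phi_n(a_nea_n)=0$, and for $n\in S$ it gives $\phi_n(1-e)=\phi_n(a_n(1-e)a_n)=0$, i.e. $\phi_n(e)=1$. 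Thus the weak*-continuous function $\phi\mapsto\phi(e)$ is $\equiv 1$ on $\{\phi_n:n\in S\}$ and $\equiv 0$ on $\{\phi_n:n\in T\}$, hence on their closures, which are therefore disjoint.

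It remains to assemble these facts. By the universal property of $\beta\mathbb N$ the map $n\mapsto\phi_n$ extends to a continuous $\pi:\beta\mathbb N\to K$, whose image is compact, hence closed, and contains the dense set $\{\phi_n\}$, so $\pi$ is surjective. If $u\ne v$ in $\beta\mathbb N$, choose a clopen $W$ with $u\in W$, $v\notin W$; then $W=\overline{S}^{\,\beta\mathbb N}$ and $\beta\mathbb N\setminus W=\overline{\mathbb N\setminus S}^{\,\beta\mathbb N}$ for $S=W\cap\mathbb N$, so $\pi(u)$ lies in the closure of $\{\phi_n:n\in S\}$ in $K$ and $\pi(v)$ in the closure of $\{\phi_n:n\notin S\}$, and these are disjoint by the previous paragraph, so $\pi(u)\ne\pi(v)$. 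A continuous bijection from a compact space onto a Hausdorff space is a homeomorphism, so $K\cong\beta\mathbb N$.

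The main obstacle is the middle step: the element $e$ provided by the $SAW^{*}$-property is a priori known only to separate the single pair $(x,y)$, and one must combine the mutual orthogonality of the $a_n$, the domination estimates $a_n\le 2^nx$ and $a_n\le 2^ny$, and the Cauchy--Schwarz identity for states to conclude that $\phi_n(e)$ records membership of $n$ in $S$ or $T$ for every $n$ at once. The convergence and orthogonality bookkeeping for $x$ and $y$, the elementary manipulations $a_ne=0$ and $a_n(1-e)=0$, and the passage to the unitization are all routine.
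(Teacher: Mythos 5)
Your proof is correct and follows essentially the same route as the paper: the same elements $\sum 2^{-n}a_n$ over the two index sets, the same application of the $SAW^{*}$-property to produce a separating $e$ with $\phi_n(e)\in\{0,1\}$ via the Cauchy--Schwarz identity $\phi_n(b)=\phi_n(a_nba_n)$, and the same extension of $n\mapsto\phi_n$ to a continuous bijection from $\beta\mathbb{N}$. You in fact supply details the paper elides (the domination estimates $a_n\le 2^nx$ showing $a_ne=0$ and $a_n(1-e)=0$); the remarks about isolated points are harmless but not needed.
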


\begin{proof}
Let $D$ be a subset of $\mathbb{N}$. We show that $\overline{\{\phi_{n} : n \in D \}} \cap \overline{\{\phi_{n} : n \in D^{c} \}} =\emptyset$.
Take $\psi \in \overline{\{\phi_{n} : n \in D \}}$. Let $a=\sum_{i\in D} 2^{-i}a_{i}$ and $b=\sum_{i\in D^{c}} 2^{-i}a_{i}$. Since $\mathcal{A}$
is a $SAW^{*}$-algebra, there exists a positive $e\in\mathcal{A}$ such that  $ea= a$ and $eb=0$. Then   $ea_{n}= a_{n}$ for $n \in D$ and $ea_{n}=0$ for every $n \in  D^{c}$.
For $n\in D$ we have $\phi_{n}(e)= \phi_{n}({ea_{n}})= \phi_{n}({a_{n}})= 1$  and for $n \in D^{c}$
we have $\phi_{n}(e)= \phi_{n}({ea_{n}})= \phi_{n}(0)= 0$. Hence $\psi(e) = 1$ and $\psi$ is not in $\overline{\{\phi_{n} : n \in D^{c} \}}$.

 Now let $F : \beta \mathbb{N}\longrightarrow \overline{\{\phi_{n} : n \in \mathbb{N}\}}$ be the continuous map such that $F(n)=\phi_{n}$. Let $\mathcal{U}$ and $\mathcal{V}$ be two distinct ultrafilters in $\beta\mathbb{N}$ and pick $X\subseteq\mathbb{N}$ such that $X \in \mathcal{U}$ but $X$ is not in $\mathcal{V}$. Therefore
 \begin{equation}
 F({\mathcal{U}}) \in F(\overline{X}) \subseteq {\overline{F(X)}} = \overline{\{\phi_{n} : n \in X\}}. \nonumber
\end{equation}
Similarly $F({\mathcal{V}})\in  \overline{\{\phi_{n} : n \in X^{c}\}}$. So $F$ is injective and clearly surjective. Since $\beta\mathbb{N}$ is compact and  $\overline{\{\phi_{n}\}}$ is Hausdorff, it follows that $F$ is a homeomorphism.
\end{proof}
\begin{theorem}
Any $SAW^{*}$ algebra is essentially non-factorizable.
\end{theorem}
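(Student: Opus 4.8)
The plan is to establish the (formally stronger) assertion that there is no surjective $*$-homomorphism from a $SAW^{*}$-algebra onto $\mathcal{B}\otimes_{\nu}\mathcal{C}$ with $\mathcal{B},\mathcal{C}$ infinite dimensional; taking the identity map then yields the theorem. So assume, towards a contradiction, that $\mathcal{A}$ is $SAW^{*}$ and $f\colon\mathcal{A}\to\mathcal{D}:=\mathcal{B}\otimes_{\nu}\mathcal{C}$ is surjective with $\mathcal{B},\mathcal{C}$ infinite dimensional; a routine reduction lets us take $\mathcal{B}$ and $\mathcal{C}$ unital. Applying Lemma~\ref{5} to $\mathcal{B}$ and to $\mathcal{C}$ produces orthogonal positive contractions $\{b_{n}\}\subset\mathcal{B}$, $\{c_{n}\}\subset\mathcal{C}$ and states $\{\psi_{n}\}$ on $\mathcal{B}$, $\{\chi_{n}\}$ on $\mathcal{C}$ with $\psi_{n}(b_{n})=\chi_{n}(c_{n})=1$ and $\psi_{n}(b_{m})=\chi_{n}(c_{m})=0$ for $m\neq n$. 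Then $\{\,b_{m}\otimes c_{n}:(m,n)\in\mathbb{N}^{2}\,\}$ is a countable family of pairwise orthogonal positive contractions in $\mathcal{D}$, and for any states $\psi$ on $\mathcal{B}$, $\chi$ on $\mathcal{C}$ the product state $\psi\otimes\chi$ on $\mathcal{B}\otimes_{\min}\mathcal{C}$ pulls back through the canonical quotient $\mathcal{D}\twoheadrightarrow\mathcal{B}\otimes_{\min}\mathcal{C}$ to a state on $\mathcal{D}$; composing with $f$ gives a state $\sigma_{\psi,\chi}$ on $\mathcal{A}$, and I abbreviate $\sigma_{m,n}:=\sigma_{\psi_{m},\chi_{n}}$, noting that $\sigma_{m,n}$ applied to any $a\in\mathcal{A}$ with $f(a)=b_{m'}\otimes c_{n'}$ equals $\psi_{m}(b_{m'})\chi_{n}(c_{n'})=\delta_{mm'}\delta_{nn'}$.

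Using surjectivity of $f$, I would lift the family $\{b_{m}\otimes c_{n}\}$ to \emph{pairwise orthogonal} positive contractions $\{a_{m,n}\}\subset\mathcal{A}$ with $f(a_{m,n})=b_{m}\otimes c_{n}$ (when $f$ is an isomorphism this is immediate). By the previous computation $\|a_{m,n}\|=\sigma_{m,n}(a_{m,n})=1$ and $\sigma_{m,n}(a_{m',n'})=0$ for $(m',n')\neq(m,n)$, so Lemma~\ref{6}, applied to the $SAW^{*}$-algebra $\mathcal{A}$ with index set $\mathbb{N}^{2}$, gives that $S:=\overline{\{\sigma_{m,n}:(m,n)\in\mathbb{N}^{2}\}}$ is homeomorphic to $\beta\mathbb{N}$, and in particular is sub-Stonean.

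Next I would put $X:=\overline{\{\psi_{n}\}}\subseteq S(\mathcal{B})$ and $Y:=\overline{\{\chi_{n}\}}\subseteq S(\mathcal{C})$ --- compact Hausdorff spaces --- and define $\rho\colon X\times Y\to S(\mathcal{A})$ by $\rho(\psi,\chi):=\sigma_{\psi,\chi}$. The map $(\psi,\chi)\mapsto\psi\otimes\chi$ is jointly weak$^{*}$-continuous (check it on elementary tensors and use uniform boundedness on their dense span), and pullback along $\mathcal{D}\twoheadrightarrow\mathcal{B}\otimes_{\min}\mathcal{C}$ and along $f$ are continuous, so $\rho$ is continuous; an iterated-density argument (vary one coordinate at a time, along $\{\psi_{m}\}$ and then along $\{\chi_{n}\}$) shows $\rho(X\times Y)\subseteq S$. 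Thus $\rho$ is a continuous map of a product of compact spaces into the sub-Stonean space $S$, so Theorem~\ref{10} shows that alternative $(1)$ of Lemma~\ref{51} holds for $\rho$. On the other hand, the sequences $(\psi_{i})$ and $(\chi_{i})$ witness alternative $(2)$: for $j<k$ we have $(j,k)\neq(i,i)$, hence $\sigma_{j,k}(a_{i,i})=\psi_{j}(b_{i})\chi_{k}(c_{i})=0$ while $\sigma_{i,i}(a_{i,i})=1$, so $\rho(\psi_{i},\chi_{i})=\sigma_{i,i}\neq\sigma_{j,k}=\rho(\psi_{j},\chi_{k})$. This contradicts the ``exactly one'' clause of Lemma~\ref{51}, and the theorem follows.

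I expect the two genuinely technical points to be these. First, one must verify that $\rho$ takes values in the sub-Stonean space $S$, not merely in $S(\mathcal{A})$ --- this is exactly what licenses the use of Farah's Theorem~\ref{10} (equivalently, of Corollary~$7.6$ of \cite{Fdim}) rather than only the elementary dichotomy of Lemma~\ref{51}, and it is the point at which the $SAW^{*}$ hypothesis really enters, through Lemma~\ref{6}. Second, in the surjective version one needs the lemma that a countable family of pairwise orthogonal positive contractions lifts through a $*$-epimorphism to a pairwise orthogonal family (equivalently, projectivity of a countable $c_{0}$-sum of cones over $C_{0}((0,1])$); this is standard but not wholly trivial, and it is unnecessary when $f$ is an isomorphism, which is the case directly relevant to essential non-factorizability.
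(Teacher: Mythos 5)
Your proposal is correct and follows essentially the same route as the paper: Lemma \ref{5} produces the biorthogonal systems, Lemma \ref{6} identifies the weak$^{*}$-closure of the product states with $\beta\mathbb{N}$, and Theorem \ref{10} (via the dichotomy of Lemma \ref{51}) supplies the contradiction --- the paper packages this last step as the non-injectivity remark after Theorem \ref{10} applied to the homeomorphism $X\times Y\cong\beta\mathbb{N}$, which is the same point. Your strengthening to surjective $*$-homomorphisms (the form claimed in the abstract) is sound provided you supply the orthogonal-lifting lemma you flag, but as you note it is not needed for the theorem as stated.
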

\begin{proof}
Let $\mathcal{A}$ be a $SAW^{*}$-algebra. Suppose that $\mathcal{A}= \mathcal{B}\otimes_{\nu}\mathcal{C}$  for the C*-completion of
the algebraic tensor product $\mathcal{B}\odot\mathcal{C}$ of infinite dimensional C*-algebras $\mathcal{B}$ and $\mathcal{C}$ with
respect to some C*-norm $\|.\|_{\nu}$ . By Lemma \ref{5} there are orthogonal sequences of positive
contractions $\{b_{n}\}\subseteq \mathcal{B}$ and $\{c_{n}\}\subseteq \mathcal{C}$ and sequences $\{\phi_{n}\}\subseteq \mathcal{B}^{*}$ and $\{\psi_{n}\}\subseteq \mathcal{C}^{*}$ such
that $\phi_{n}(b_{n})=\psi_{n}(c_{n})=1$ and $\phi_{n}(b_{m})=\psi_{n}(c_{m})=0$ for $m\neq n$. Identifying $\mathcal{A}$ with $\mathcal{B}\otimes_{\nu}\mathcal{C}$
and letting $a_{m,n}=b_{m}\otimes c_{n}$ and $\gamma_{m,n}=\phi_{m}\otimes\psi_{n}$, it is immediate that\\
\begin{equation}
\nonumber\gamma_{m,n}(a_{m^{\prime},n^{\prime}})=
\left\{\begin{matrix}
 & 1 & (m, n) = (m^{\prime},n^{\prime}) \\
 & 0 & (m, n) \neq (m^{\prime},n^{\prime})
\end{matrix}\right. .
\end{equation}\\
Let $X=\overline{\{\phi_{n}: n\in\mathbb{N}\}}^{w^{*}}$ and $Y=\overline{\{\psi_{n}: n\in\mathbb{N}\}}^{w^{*}}$.
Then $X$ and $Y$ are compact subsets
of $\mathcal{B}^{*}$ and $\mathcal{C}^{*}$, respectively, $\{\phi_{n} : n\in\mathbb{N}\} \times \{\psi_{n} : n\in\mathbb{N} \}$ is a dense subset of $\overline{\{\gamma_{m,n}\}}^{w^{*}}$
and by compactness $X\times Y$ is homeomorphic to $\overline{\{\gamma_{m,n}\}}^{w^{*}}$, which is homeomorphic to $\beta\mathbb{N}$
by Lemma \ref{6}. This contradicts the remark following the proof of theorem \ref{10}. Hence there is no *-isomorphism $\mathcal{A}\cong \mathcal{B}\otimes\mathcal{C}$ with $\mathcal{B}$ and $\mathcal{C}$ infinite dimensional.
\end{proof}
\begin{corollary}
The Calkin algebra is essentially non-factorizable.
\end{corollary}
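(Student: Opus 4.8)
The plan is to obtain this as a direct specialization of the preceding theorem, so the remaining work consists only in checking that the Calkin algebra meets its hypotheses. First I would recall that, for a separable infinite-dimensional Hilbert space $H$, the Calkin algebra is by definition $\mathcal{Q}(H)=\mathcal{B}(H)/\mathcal{K}(H)$, and that $\mathcal{B}(H)$ is the multiplier algebra of the C*-algebra $\mathcal{K}(H)$ of compact operators; hence $\mathcal{Q}(H)$ is precisely the corona algebra of $\mathcal{K}(H)$.

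Next I would observe that $\mathcal{K}(H)$ is $\sigma$-unital: since $H$ is separable, any increasing sequence $\{p_{n}\}$ of finite-rank projections converging strongly to $1$ is a countable approximate identity for $\mathcal{K}(H)$ (indeed $\mathcal{K}(H)$ is separable). By Pedersen's theorem recalled in the introduction (cf. \cite{PedSAW}), the corona algebra of every $\sigma$-unital C*-algebra is a $SAW^{*}$-algebra, so the Calkin algebra is a $SAW^{*}$-algebra.

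Finally I would invoke the theorem just proved---every $SAW^{*}$-algebra is essentially non-factorizable---applied to $\mathcal{A}=\mathcal{Q}(H)$, which is moreover unital as a quotient of the unital algebra $\mathcal{B}(H)$. This yields the conclusion, and in particular settles Wassermann's question in the affirmative: the Calkin algebra cannot be written as $\mathcal{B}\otimes_{\nu}\mathcal{C}$ for any C*-norm $\nu$ with both $\mathcal{B}$ and $\mathcal{C}$ infinite-dimensional. I do not anticipate any genuine obstacle here; all of the difficulty has already been absorbed into the structural theorem above, and what remains is the classical identification of $\mathcal{Q}(H)$ as the corona of a $\sigma$-unital (in fact separable) C*-algebra.
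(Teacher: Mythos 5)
Your proposal is correct and matches the paper's (implicit) argument exactly: the Calkin algebra is the corona of the $\sigma$-unital algebra of compact operators, hence a $SAW^{*}$-algebra by Pedersen's result cited in the introduction, and the preceding theorem applies. No issues.
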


We don't know if theorem \ref{1} is true for non-commutative C*-algebras. But an analogous theorem for non-commutative C*-algebras would provide us with a strong tool to study the automorphisms between tensorial powers of the Calkin algebra or other $SAW^{*}$-algebras such as ultrapowers of C*-algebras.

\end{document}